\newtheorem{theorem}{Theorem}
\newtheorem{lemma}[theorem]{Lemma}
\theoremstyle{definition}
\theoremstyle{remark}
\def\[{{\langle}}
\def\]{{\rangle}}
\def\R{{\mathbb  R}}
\def\Z{{\mathbb Z}}
\def\N{{\mathbb N}}
\begin{document}
\title{Heat-flow monotonicity related to the Hausdorff--Young inequality}
\author{Jonathan Bennett}
\author{Neal Bez}
\author{Anthony Carbery}
\thanks{The first and second authors were supported by EPSRC grant
EP/E022340/1. The third author would like to thank William Beckner
for many useful and interesting conversations on the sharp
Hausdorff--Young inequality.}
\address{Jonathan Bennett and Neal Bez\\ School of Mathematics \\
The University of Birmingham \\
The Watson Building \\
Edgbaston \\
Birmingham \\
B15 2TT \\
United Kingdom}\email{J.Bennett@bham.ac.uk\\ N.Bez@bham.ac.uk}
\address{Anthony Carbery\\ School of Mathematics and Maxwell Institute for Mathematical Sciences\\
The University of Edinburgh \\
James Clerk Maxwell Building \\
King's Buildings \\
Edinburgh \\
EH3 9JZ \\
United Kingdom} \email{A.Carbery@ed.ac.uk}
\date{26th of June 2008}
\begin{abstract}
It is known that if $q$ is an even integer then the
$L^q(\mathbb{R}^d)$ norm of the Fourier transform of a superposition
of translates of a fixed gaussian is monotone increasing as their
centres ``simultaneously slide" to the origin. We provide explicit
examples to show that this monotonicity property fails dramatically
if $q > 2$ is not an even integer. These results are equivalent,
upon rescaling, to similar statements involving solutions to heat
equations. Such considerations are natural given the celebrated
theorem of Beckner concerning the gaussian extremisability of the
Hausdorff--Young inequality.
\end{abstract}
\maketitle For $d \in \N$ we let $H_t$ denote the heat kernel on
$\R^d$ given by
\begin{equation*}
H_t(x)=t^{-d/2}e^{-\pi|x|^2/t},
\end{equation*}
and we define the Fourier transform $\widehat{\mu}$ of a finite
Borel measure $\mu$ on $\R^d$ by
\begin{equation*}
  \widehat{\mu}(\xi) := \int_{\R^d} e^{-2\pi i x \cdot \xi}\,d\mu(x).
\end{equation*}
In what follows, for $p \in [1,\infty]$, $p'$ will denote the dual
exponent satisfying $\tfrac{1}{p} + \tfrac{1}{p'} = 1$. For $\mu$ a
positive finite Borel measure on $\mathbb{R}^d$ and $2 \leq q \leq
p' \leq \infty$, let
  $Q_{p,q}:(0,\infty)\rightarrow\mathbb{R}$ be given by
  \begin{equation*}
    Q_{p,q}(t) = t^{\frac{d}{2}\left(\frac{1}{q}-\frac{1}{p'} \right)}
    \left\| \widehat{u(t,\cdot)^{1/p}} \right\|_q,
  \end{equation*}
  where
  $u(t,\cdot) = H_t \ast \mu$.
If $q=2k$ is an even integer then by Plancherel's theorem one may
write $Q_{p,q}$ in terms of a $k$-fold convolution
\begin{equation}\label{James}
Q_{p,q}(t) = t^{\frac{d}{2}\left(\frac{1}{q}-\frac{1}{p'}
\right)}\|u(t,\cdot)^{1/p}*\cdots *u(t,\cdot)^{1/p}\|_{2}^{2/q}.
\end{equation}
Expressions of this type are by now well-known to be nondecreasing
for $t>0$ and this follows from the heat-flow approach to
generalised Young's inequalities developed in \cite{CLL} and \cite{BCCT} (see also
\cite{BB} for an alternative approach). For the convenience of the reader, in the Appendix we
have included a sketch of how this monotonicity follows from
\cite{BCCT}. We note that for $p=1$ this is a particularly
straightforward exercise using the fact that
$\widehat{H_t}(\xi)=e^{-\pi t|\xi|^2}$. The purpose of this article
is to show that this heat-flow monotonicity fails dramatically if
$q$ is not an even integer.
\begin{theorem} \label{t:counter}
  Let $d \in \N$, $2\leq q\leq p' \leq \infty$ and suppose $q$ is not an even integer. Then there exists
  a positive finite Borel measure $\mu$ on $\R^d$ such that if $u(t,\cdot) = H_t \ast
  \mu$ then
\begin{equation*} Q_{p,q}(t) := t^{\frac{d}{2}\left(\frac{1}{q}-\frac{1}{p'} \right)}
\left\| \widehat{u(t,\cdot)^{1/p}} \right\|_q
\end{equation*}
  is strictly decreasing for sufficiently small $t > 0$.
\end{theorem}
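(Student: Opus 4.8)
The plan is to strip away the explicit $t$-dependence by rescaling, reduce the monotonicity question to the large-parameter behaviour of a single gaussian-superposition functional, and then detect the failure of monotonicity through an almost-periodic Fourier expansion whose lowest-frequency coefficient can be forced negative exactly when $q$ is not an even integer. Writing $g = e^{-\pi|\cdot|^2} = H_1$ and substituting $x = \sqrt{t}\,y$ one checks that $u(t,\sqrt t\,y) = t^{-d/2}(g*\mu_s)(y)$, where $\mu_s$ denotes the dilate of $\mu$ by $s := t^{-1/2}$. Substituting into the definition of $Q_{p,q}$, the prefactor $t^{\frac d2(\frac1q-\frac1{p'})}$ cancels exactly against the jacobian and the homogeneity of the $L^q$ norm, giving the exact identity
\begin{equation*}
Q_{p,q}(t) = R(s), \qquad R(s) := \bigl\|\widehat{(g*\mu_s)^{1/p}}\bigr\|_q, \quad s = t^{-1/2}.
\end{equation*}
Since $s = t^{-1/2}$ is an order-reversing bijection of $(0,\infty)$, proving $Q_{p,q}$ strictly decreasing for small $t$ is equivalent to proving $R$ strictly increasing for large $s$, and this is the statement I would establish.

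I would take $\mu = \sum_j \delta_{v_j}$ a finite sum of point masses, so that for large $s$ the gaussians $g(\cdot - sv_j)$ are well separated. In the model case $p=1$ the power disappears, $\widehat{g*\mu_s}(\xi) = e^{-\pi|\xi|^2}\widehat{\mu}(s\xi)$, and one has the exact formula
\begin{equation*}
R(s)^q = \int_{\R^d} e^{-\pi q|\xi|^2}\,|\widehat\mu(s\xi)|^q\,d\xi = q^{-d/2}\sum_{\omega}\gamma_\omega\, e^{-\pi s^2|\omega|^2/q},
\end{equation*}
where $|\widehat\mu|^q = \sum_\omega \gamma_\omega e^{-2\pi i\omega\cdot\xi}$ is the almost-periodic expansion, with frequencies $\omega$ in the group generated by the differences $v_i - v_j$. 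Differentiating, the sign of $R'(s)$ for large $s$ is governed by the most slowly decaying terms, i.e.\ those with the smallest $|\omega|\neq 0$: one gets $R$ strictly increasing precisely when $\sum_{|\omega|=\omega_{\min}}\gamma_\omega < 0$. The key structural dichotomy is that this is \emph{impossible} when $q$ is even: if $q=2k$ then $|\widehat\mu|^{2k}$ is the Fourier transform of the positive measure $(\mu*\tilde\mu)^{*k}$, so every $\gamma_\omega\geq 0$ and $R$ must decrease, consistently with the known monotonicity. The whole problem is thus reduced to realising a strictly negative lowest coefficient when $q$ is not an even integer.

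The non-smoothness of $z\mapsto|z|^q$ is what makes this possible. For the elementary two-point measure $\delta_0+\delta_a$ one has $|\widehat\mu|^q = 2^q|\cos(\pi a\xi)|^q$, whose Fourier coefficients are the explicit ratios $c_n = \Gamma(q+1)/\bigl(2^q\,\Gamma(\tfrac q2+n+1)\Gamma(\tfrac q2-n+1)\bigr)$; these vanish for $|n|>q/2$ when $q$ is an even integer but, for non-even $q$, are nonzero and change sign, so $c_{n_0}<0$ for some $n_0$. I would then superpose (or convolve) finitely many such two-point measures with commensurate, carefully tuned frequencies so that a negative product of coefficients is transplanted onto the lowest frequency of the combined configuration, while the unavoidable positive contributions from small indices are arranged to be dominated; keeping all frequencies on a lattice keeps the spectrum discrete and the minimal frequency well-defined.

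The hard part will be twofold. First, for $p\neq 1$ the nonlinearity $(g*\mu_s)^{1/p}$ does not split over the separated spikes, and the overlap corrections are of the same exponential order $e^{-cs^2}$ as the very terms $\gamma_\omega e^{-\pi s^2|\omega|^2/(pq)}$ that control monotonicity, so they cannot simply be discarded. I expect the main obstacle to be showing that the leading small-$t$ asymptotics of $\|\widehat{(g*\mu_s)^{1/p}}\|_q^q$ still takes the form $\mathrm{const} + \gamma^{(p)}_{\omega_{\min}}e^{-\pi s^2\omega_{\min}^2/(pq)}+\cdots$ with a computable coefficient $\gamma^{(p)}_{\omega_{\min}}$ that can be made negative; the exact monotonicity for even $q$ forces $\gamma^{(p)}_{\omega_{\min}}\geq 0$ there, which is a useful consistency check on any such expansion. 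Second, even in the clean case $p=1$, the combinatorial sign computation of the previous paragraph — verifying that the net coefficient at the minimal frequency is genuinely negative and not overwhelmed by positive low-index contributions — is the delicate quantitative heart of the construction.
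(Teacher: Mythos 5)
Your skeleton coincides with the paper's: the rescaling $Q_{p,q}(t)=R(t^{-1/2})$, the exact expansion $Q_{1,q}(t)^q=q^{-1/2}\sum_n c_n e^{-\pi n^2/qt}$ for lattice-supported point masses, the criterion that the sign of the derivative for small $t$ is decided by the lowest nonzero frequency, and the remark that even $q$ forces nonnegative coefficients. But the first load-bearing step --- actually producing a measure whose \emph{lowest}-frequency coefficient is negative --- is exactly what you defer, and your one computed example cannot be patched by the route you describe. For $\delta_0+\delta_a$ the lowest-frequency coefficient is proportional to $\Gamma(q+1)/\bigl(\Gamma(\tfrac q2+2)\Gamma(\tfrac q2)\bigr)>0$ for all $q>2$; the negative coefficients live at frequencies $na$ with $n>q/2$, high in the spectrum, and your plan to ``transplant'' them to the bottom while positive low-index contributions are ``arranged to be dominated'' is precisely the unproved heart of the matter. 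The paper's mechanism is different in kind from domination: it takes $\mu=\delta_0+r\delta_m+r\delta_n$ with $m,n$ coprime and $r<1/2$, expands $|\widehat\mu|^q=\widehat\mu^{\,q/2}\,\overline{\widehat\mu}^{\,q/2}$ in binomial series with coefficients $a_k$ that are positive for $k<q/2+1$ and strictly alternate in sign thereafter (this is the only place non-evenness of $q$ enters); the pair $(k,k')$ contributes to $c_1+c_{-1}$ only through integer solutions of $m(j_1-j_1')+n(j_2-j_2')=\pm1$ with $j_1+j_2=k$, $j_1'+j_2'=k'$, $j_i,j_i'\geq 0$, and a parity lemma ($n-m$ even kills equal-parity pairs) together with a Bezout-type size lemma (any solution forces $\min\{k,k'\}\geq\min\{\alpha_\ast,\beta_\ast\}$) shows that for a suitable choice such as $m=2k(q)+1$, $n=m+2$ the solution set is \emph{empty} whenever $a_ka_{k'}>0$. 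Every surviving term is $\leq 0$ and one is $<0$; nothing is ever dominated, the positive contributions are annihilated exactly. Without an analogous exact-cancellation device your ``careful tuning'' has no visible route to completion, and this is a genuine gap, not a detail.

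The second gap is the $p>1$ case, where you correctly identify the obstacle and then stop at it, asserting that the overlap corrections are ``of the same exponential order'' as the monotonicity-driving term. The paper's resolution is to decouple the two scales, and this is the idea your sketch is missing. Because the measure is supported on integers, the minimal frequency stays pinned at $1$, so the main term (coming from $H_t^{1/p}\ast\widetilde\mu$ with $\widetilde\mu=\delta_0+r^{1/p}\delta_m+r^{1/p}\delta_n$) has derivative of size $t^{-2}e^{-\pi/(pqt)}$; meanwhile the \emph{separation} of the spikes is $m$, which can be made arbitrarily large while keeping the two emptiness lemmas valid (the paper's second choice $m=3k_0+1$, $n=2m+3$, $k_0$ even and as large as one pleases). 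The replacement of $(H_t\ast\mu)^{1/p}$ by $H_t^{1/p}\ast\widetilde\mu$ then produces an error whose derivative is $O(t^{-\gamma}e^{-cm^2/t})$, which is negligible against $t^{-2}e^{-\pi/(pqt)}$ once $cm^2>\pi/(pq)$. So the corrections are \emph{not} of the same exponential order --- but only because the construction lets the separation grow while the minimal frequency is held fixed, a degree of freedom your setup never isolates. With that observation your outline for $p>1$ closes along the paper's lines; without it, it does not close at all.
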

By making an appropriate rescaling one may rephrase the above
results in terms of ``sliding gaussians" in the following way. Let
$\mu$ be a positive finite Borel measure on $\mathbb{R}^d$, and
define $f:(0,\infty)\times\mathbb{R}^d\rightarrow\mathbb{R}$ by
$$
f(t,x)=\int_{\mathbb{R}^d}e^{-\pi|x-tv|^2}d\mu(v).$$ We interpret
$f$ as a superposition of translates of a fixed gaussian, which
simultaneously slide to the origin as $t$ tends to zero. For $2 \leq
q \leq p'\leq\infty$ define the quantity $\widetilde{Q}_{p,q}(t)$ by
$$ \widetilde{Q}_{p,q}(t)=\|\widehat{f(t,\cdot)^{1/p}}\|_q.$$ The
nondecreasingness of $Q_{p,q}$ for $q$ an even integer tells us that
$\widetilde{Q}_{p,q}(t)$ is nonincreasing, and Theorem
\ref{t:counter} tells us that whenever $q$ is not an even integer,
there exist such measures $\mu$ for which $\widetilde{Q}_{p,q}(t)$
is \emph{strictly increasing} for sufficiently large $t$. It is
interesting to note that the quantity $\|f(t,\cdot)\|_{q'/p}^{1/p}$,
related to $\widetilde{Q}_{p,q}(t)$ via the Hausdorff--Young
inequality
$$
\widetilde{Q}_{p,q}(t)\leq\|f(t,\cdot)^{1/p}\|_{q'}=\|f(t,\cdot)\|_{q'/p}^{1/p},$$
is nonincreasing \emph{for all} $2\leq q\leq p'\leq\infty$, whether
$q$ is an even integer or not. See \cite{BCCT}.

The quantities $Q_{p,q}$ have a more direct relation with the
Hausdorff--Young inequality when $q=p'$. Suppose that $d\mu(x)=|f(x)|^pdx$ for some sufficiently well-behaved function $f$ on $\mathbb{R}^d$ (such as bounded with compact support). In this case, if $Q_{p,q}$ is nondecreasing then it is straightforward to verify that
\begin{equation*}
 \|\,\widehat{|f|}\,\|_{p'}=\lim_{t \rightarrow 0} Q_{p,q}(t) \leq \lim_{t \rightarrow \infty}
 Q_{p,q}(t)=\Bigl\|\widehat{H_1^{1/p}}\Bigr\|_{p'}\|f\|_{p},
\end{equation*}
where $H_1$ is the heat kernel at time $t=1$.
Now if $p'$ is an even integer then
$$
\|\,\widehat{f}\,\|_{p'}\leq\|\,\widehat{|f|}\,\|_{p'},
$$
and so one recovers
the sharp form of the Hausdorff--Young inequality on $\R^d$
\begin{equation} \label{e:HY}
\|\widehat{f}\:\|_{p'}\leq
\left(\frac{p^{1/p}}{p'^{1/p'}}\right)^{d/2}\|f\|_p
\end{equation}
for $p'$ an even integer, due to Babenko \cite{Babenko1},
\cite{Babenko2}. We note that since \eqref{e:HY} is not in general valid for nonnegative $f$ when $p' < 2$, it follows that $Q_{p,q}(t)$ cannot possibly be
nondecreasing for $t > 0$ when $q=p'<2$.

Theorem \ref{t:counter} is of course a significant obstacle to
finding a proof based on heat-flow of the sharp Hausdorff--Young
inequality due to Beckner \cite{Beckner2}, \cite{Beckner}; i.e. for
\emph{all} $p'\in [2,\infty)$. It should also be remarked that
whenever $p' \in [2,\infty)$ is not an even integer there exists
$f\in\mathcal{S}(\mathbb{R}^d)$ such that
\begin{equation} \label{e:maj}
\|\widehat{f}\|_{p'} > \| \, \widehat{|f|} \,\|_{p'}.
\end{equation}
Thus, in general, it is not without loss of generality that one
considers nonnegative inital data for the heat-flow. Inequality
\eqref{e:maj} may be seen as a consequence of an observation due to
Hardy and Littlewood \cite{HL} concerning a majorant problem in the
context of classical Fourier series. In fact, the counterexamples in
our proof of Theorem \ref{t:counter} are somewhat in the spirit of
the Hardy--Littlewood majorant counterexample in \cite{HL}.

The idea of looking for monotone quantities underlying inequalities
in analysis is of course not new. One way of constructing such
quantities which has been successful in recent years is via
heat-flow methods of the type we consider here. As we have already
mentioned, this heat-flow perspective applies to a wide variety of
so-called generalised Young's inequalities (or Brascamp--Lieb
inequalities), which include the classical Young's convolution,
multilinear H\"older, and Loomis--Whitney inequalities -- see in particular
\cite{CLL} and \cite{BCCT}. Among other notable (and closely
related) examples from harmonic analysis are certain multilinear
analogues of Kakeya maximal inequalities \cite{BCT} and adjoint
restriction inequalities for the Fourier transform -- see the
forthcoming \cite{BCCH}.
\section*{Proof of Theorem \ref{t:counter}}
It suffices to handle $d=1$, since if $\mu$ is a one-dimensional
counterexample, then its $d$-fold tensor product is a
$d$-dimensional counterexample. The case $p=1$ will turn out to be
pivotal and so we deal with that first of all.

Observe that if $\mu$ is a finite sum of Dirac delta measures each
supported at an integer, then $\widehat{\mu}$ is a trigonometric
polynomial, and thus a bounded periodic function on $\R$ with period
$1$. If $c_n$ denotes the $n$th Fourier coefficient of
$|\widehat{\mu}|^q$, then
  \begin{eqnarray*} Q_{1,q}(t)^q & = & t^{1/2} \int_\R \widehat{H_t}(\xi)^q
|\widehat{\mu}(\xi)|^q\,d\xi \\
& = & \sum_{n \in \Z} c_n t^{1/2} \int_\R \widehat{H_t}(\xi)^q
 e^{2\pi in\xi}\,d\xi \\
& = & \sum_{n \in \Z} c_n t^{1/2} \int_\R e^{-q\pi t\xi^2}
 e^{2\pi in\xi}\,d\xi \\
& = & q^{-1/2} \sum_{n \in \Z} c_n e^{-\pi n^2/qt}.
\end{eqnarray*}
Since $q > 2$ it follows that $|\widehat{\mu}|^q$ is continuously
differentiable everywhere and thus the Fourier coefficients of
$|\widehat{\mu}|^q$ are absolutely summable. This is sufficient to
justify the above interchange of summation and integration.
Furthermore, note that
$$\sum_{\substack{n \in \Z \\ n \neq 0}} n^2c_n t^{-2} e^{-\pi n^2/qt}$$
is uniformly convergent because, trivially, each summand is bounded
in modulus by an absolute constant (i.e. independent of $t
> 0$ and $n \neq 0$) multiple of $1/n^2$. Again, by standard
results, it follows that we may differentiate the above expression
for $Q_{1,q}(t)^q$ term by term to get,
  \begin{eqnarray*}
\frac{d}{dt}[Q_{1,q}(t)^q] & = & \frac{\pi}{q^{3/2}}t^{-2}
\sum_{\substack{n \in \Z \\ n
\neq 0}} n^2c_n e^{-\pi n^2/qt} \\
 & = & \frac{\pi}{q^{3/2}}t^{-2} \sum_{n=1}^\infty n^2(c_n+c_{-n}) e^{-\pi n^2/qt} \\
& = &\frac{\pi}{q^{3/2}}t^{-2} e^{-\pi/qt} \left( c_1+c_{-1} +
\sum_{n=2}^\infty n^2(c_n+c_{-n})
 e^{-\pi(n^2-1)/qt} \right).
\end{eqnarray*}
  Since $(c_n)_{n \in \Z}$ is, in particular, a bounded sequence it follows that
  \begin{equation*}
  \sum_{n =2}^\infty n^2(c_n+c_{-n}) e^{-\pi(n^2-1)/qt} \longrightarrow 0
\end{equation*}
  as $t$ tends to zero. Thus, to prove Theorem \ref{t:counter} when $p=1$
  it suffices to find a $\mu$ formed out of a finite sum of Dirac delta measures
  each supported at an integer and such that
  \begin{equation} \label{e:negcoeff} c_1+c_{-1} < 0.
\end{equation}
To this end, we let $m,n \in \Z$ be coprime, $r \in (0,1/2)$ and
\begin{equation}\label{example}
\mu := \delta_0 + r\delta_m + r\delta_n,
\end{equation}
so that
\begin{equation*}
\widehat{\mu}(\xi) = 1 + re^{-2\pi im \xi} + re^{-2\pi in \xi}.
\end{equation*}
Since $|\widehat{\mu}(\xi)|^2 =
\widehat{\mu}(\xi)\overline{\widehat{\mu}(\xi)}$ we have that
\begin{equation*}
  |\widehat{\mu}(\xi)|^q = \sum_{k=0}^\infty a_kr^k(e^{-2\pi
  im\xi}+e^{-2\pi in\xi})^k \sum_{k'=0}^\infty a_{k'}r^{k'}(e^{2\pi
  im\xi}+e^{2\pi in\xi})^{k'}
\end{equation*}
where $a_k$ is the $k$th binomial coefficient in the expansion of
$(1+x)^{q/2}$; i.e.
\begin{equation*}
a_k = \frac{\frac{q}{2}(\frac{q}{2}-1)\ldots(\frac{q}{2}-k+1)}{k!}.
\end{equation*}
Observe that if $k < q/2+1$ then $a_k > 0$, and thereafter $a_k$ is
strictly alternating in sign. Now,
\begin{eqnarray*}
  c_1+c_{-1} & = & \int_0^1 |\widehat{\mu}(\xi)|^q(e^{-2\pi i \xi} + e^{2\pi i\xi})\,d\xi \\
  & = & \sum_{k,k'=0}^\infty a_k a_{k'} r^{k+k'} \\
  && \quad \times
  \int_0^1 (e^{-2\pi im\xi}+e^{-2\pi in\xi})^k (e^{2\pi im\xi}+e^{2\pi
  in\xi})^{k'}(e^{-2\pi i \xi}+e^{2\pi i \xi})\,d\xi
\end{eqnarray*}
(of course, since $\mu$ is a real measure it follows that
$|\widehat{\mu}|$ is even and therefore $c_1 = c_{-1}$; nevertheless
it is slightly more convenient to consider $c_1+c_{-1}$ in order to
preserve a certain symmetry later in the proof). To justify the
above interchange of summation and integration, it suffices to show
that $\sum_{k \geq 0} |a_k|(2r)^k$ is finite. Since $(a_k)_{k \geq
0}$ is a bounded sequence and $r \in (0,1/2)$ this is immediate.
Therefore,
\begin{equation*}
  c_1+c_{-1} = \sum_{k,k'=0}^\infty a_k a_{k'} r^{k+k'}
  \sum_{(\textbf{j},\textbf{j}') \in \Lambda_{k,k'}} {k \choose j_1}
  {k' \choose j_1'}
\end{equation*}
where
\begin{align*}
\Lambda_{k,k'} := \{ (\textbf{j},\textbf{j}') =
((j_1,j_2),(j_1',j_2')) \in (\N_0^2)^2 \,\, : \,\, & j_1+j_2 = k,
\,\, j_1'+j_2' = k' \,\, \mbox{and} \\ & \qquad
m(j_1-j_1')+n(j_2-j_2')=\pm1\}
\end{align*}
and $\N_0 := \N \cup \{0\}$.

We claim that by choosing $m$ and $n$ appropriately (depending on
$q$) we can ensure that $\Lambda_{k,k'}$ is empty \emph{whenever}
$a_ka_{k'} > 0$. It will only remain to check that $\Lambda_{k,k'}$
is nonempty for some $k$ and $k'$ for which $a_ka_{k'} < 0$. The
proof of the claim proceeds as follows. Firstly, a simple argument
will show that if $n-m$ is even the sets $\Lambda_{k,k'}$ are empty
whenever $k$ and $k'$ have the same parity. A second argument will
show that $\Lambda_{k,k'}$ is empty whenever one of $k$ and $k'$ is
less than $q/2+1$ upon an appropriate choice of $m$ and $n$. This
leaves a contribution from summands with $k$ and $k'$ greater than
$q/2+1$ and, as long as one summand is nonzero, it is clear that
$c_1+c_{-1} < 0$ as required.

We now turn to the details. Since $m$ and $n$ are coprime there
exist integers $\alpha_0$ and $\beta_0$ such that
\begin{equation} \label{e:a0b0}
\alpha_0m+\beta_0n = 1;
\end{equation}
moreover if $\alpha m+\beta n = \pm1$ for integers $\alpha$ and
$\beta$ then
\begin{equation*}
(\alpha,\beta) = \pm(\alpha_0,\beta_0) + N(n,-m) \quad \text{for
some $N \in \Z$}.
\end{equation*}
Therefore, if $(\textbf{j},\textbf{j}') \in \Lambda_{k,k'}$ then
\begin{equation}
\label{e:j1j1'} j_1-j_1' = \pm\alpha_0 + Nn
\end{equation}
and
\begin{equation}
\label{e:j2j2'} j_2-j_2' = \pm\beta_0 - Nm
\end{equation}
for some $N \in \Z$,
\begin{equation} \label{e:j1j2}
j_1+j_2 = k,
\end{equation}
and
\begin{equation} \label{e:j1'j2'}
j_1'+j_2' = k'.
\end{equation}

\begin{lemma} \label{l:parity}
  Suppose $n-m$ is even, and that $k$ and $k'$ have the same parity. Then $\Lambda_{k,k'}$ is
  empty.
\end{lemma}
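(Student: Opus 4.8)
The plan is to obtain a contradiction by a straightforward parity (i.e.\ mod $2$) analysis of the constraints defining $\Lambda_{k,k'}$. The crucial preliminary observation is that under our hypotheses both $m$ and $n$ must be \emph{odd}: since $m$ and $n$ are coprime they cannot both be even, and since $n-m$ is even they cannot have opposite parities. Hence $m \equiv n \equiv 1 \pmod{2}$, and this is the only place where coprimality and the evenness of $n-m$ enter the argument.

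With this observation in hand the argument is essentially immediate. Suppose, for a contradiction, that there exists $(\textbf{j},\textbf{j}') \in \Lambda_{k,k'}$. Reducing the constraint $m(j_1-j_1')+n(j_2-j_2')=\pm1$ modulo $2$ and using $m \equiv n \equiv 1 \pmod 2$ gives
\begin{equation*}
(j_1-j_1')+(j_2-j_2') \equiv 1 \pmod{2}.
\end{equation*}
By \eqref{e:j1j2} and \eqref{e:j1'j2'} the left-hand side is precisely $(j_1+j_2)-(j_1'+j_2') = k-k'$, so I would conclude that $k-k'$ is odd. This contradicts the assumption that $k$ and $k'$ have the same parity, and hence $\Lambda_{k,k'}$ must be empty.

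I do not expect any genuine obstacle here, since the entire content of the lemma is the parity bookkeeping above; the main point is simply to recognise that the forced oddness of $m$ and $n$ collapses the weighted linear constraint modulo $2$ into the single relation $k-k' \equiv \pm 1 \pmod 2$. As an alternative (and as a sanity check) one could instead run the computation through the explicit parametrization \eqref{e:j1j1'}--\eqref{e:j1'j2'}: adding those two identities yields $k-k' = \pm(\alpha_0+\beta_0) + N(n-m)$, and reducing modulo $2$ (using that $n-m$ is even together with \eqref{e:a0b0}, which forces $\alpha_0+\beta_0 \equiv 1 \pmod 2$) reproduces the same conclusion.
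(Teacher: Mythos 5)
Your proof is correct. It is also a slightly different, and in fact more elementary, route than the paper's. You work directly from the defining relation of $\Lambda_{k,k'}$: coprimality together with $n-m$ even forces $m$ and $n$ to both be odd, so reducing $m(j_1-j_1')+n(j_2-j_2')=\pm1$ modulo $2$ and using $j_1+j_2=k$, $j_1'+j_2'=k'$ gives $k-k'$ odd, contradicting the parity hypothesis. The paper never isolates the observation that $m,n$ are both odd; instead it sums the four equations \eqref{e:j1j1'}, \eqref{e:j2j2'}, \eqref{e:j1j2}, \eqref{e:j1'j2'} of its Bezout parametrization to obtain $2(j_1+j_2) = \pm(\alpha_0+\beta_0) + (n-m)N + k+k'$, concludes that $\alpha_0+\beta_0$ would have to be even, and then derives a contradiction by rewriting the Bezout identity as $(\alpha_0+\beta_0)m + (n-m)\beta_0 = 1$, which forces $\alpha_0+\beta_0$ odd. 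Both are mod-$2$ arguments with the same content, but yours bypasses the coefficients $\alpha_0,\beta_0,N$ entirely and so does not depend on the parametrization at all (which the paper sets up anyway because it is genuinely needed for Lemma \ref{l:star}); your ``sanity check'' variant, which adds \eqref{e:j1j1'} and \eqref{e:j2j2'} and uses that $\alpha_0+\beta_0$ is odd, is essentially the paper's proof. The one thing to keep in mind is that your argument silently uses $\pm1 \equiv 1 \pmod 2$, which is fine, and that the coprimality of $m$ and $n$ is indeed part of the standing assumptions in the construction \eqref{example}, so your preliminary observation is legitimate.
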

\begin{proof}
Let $(\textbf{j},\textbf{j}') \in \Lambda_{k,k'}$. By summing
equations \eqref{e:j1j1'}, \eqref{e:j2j2'}, \eqref{e:j1j2}, and
\eqref{e:j1'j2'} it follows that
\begin{equation} \label{e:incon}
 2(j_1+j_2) = \pm(\alpha_0 + \beta_0) + (n-m)N + k+k'.
\end{equation}
So $n-m$ even implies $\alpha_0 + \beta_0$ is even. On the other
hand, $n-m$ even and
\begin{equation*}
 (\alpha_0 + \beta_0)m + (n-m)\beta_0 = 1
\end{equation*}
imply that $\alpha_0 + \beta_0$ is odd. Hence $\Lambda_{k,k'} =
\emptyset$.
\end{proof}
For fixed integers $\alpha_0$ and $\beta_0$ satisfying
\eqref{e:a0b0}, define
\begin{equation*}
  \alpha_\ast := \min\{|\alpha_0+Nn|:N \in \Z\}
\end{equation*}
and
\begin{equation*}
  \beta_\ast := \min \{|\beta_0-Nm| : N \in
  \Z\}.
\end{equation*}
\begin{lemma} \label{l:star}
  Suppose $m$ and $n$ are positive integers. Then the set $\Lambda_{k,k'}$ is empty whenever
  \begin{equation} \label{e:tocheck}
    k' \geq 0 \quad \mbox{and} \quad 0 \leq k \leq \min\{\alpha_\ast,\beta_\ast\} - 1
  \end{equation}
  or
  \begin{equation*}
    k \geq 0 \quad \mbox{and} \quad 0 \leq k' \leq \min\{\alpha_\ast,\beta_\ast\} -
    1.
  \end{equation*}
\end{lemma}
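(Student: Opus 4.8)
The plan is to argue by contradiction directly from the four relations \eqref{e:j1j1'}--\eqref{e:j1'j2'} together with the nonnegativity of the entries, reducing everything to a single sign consideration. Suppose $(\mathbf{j},\mathbf{j}')\in\Lambda_{k,k'}$ and write $\alpha:=j_1-j_1'$ and $\beta:=j_2-j_2'$, so that the defining relation of $\Lambda_{k,k'}$ reads $m\alpha+n\beta=\pm1$. The first step is to record the two lower bounds $|\alpha|\geq\alpha_\ast$ and $|\beta|\geq\beta_\ast$. These follow from \eqref{e:j1j1'} and \eqref{e:j2j2'} once one observes that, because of the sign ambiguity, the quantity $\pm\alpha_0+Nn$ ranges in absolute value over exactly the same set as $\alpha_0+Nn$, whose minimum modulus is $\alpha_\ast$ by definition; the same remark applies to $\beta$ and $\beta_\ast$.

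The second step is to convert the hypothesis $0\leq k\leq\min\{\alpha_\ast,\beta_\ast\}-1$ into a sign constraint. From \eqref{e:j1j2} and $j_1,j_2\geq 0$ we have $j_1,j_2\leq k$, and hence $\alpha=j_1-j_1'\leq j_1\leq k<\alpha_\ast$ and likewise $\beta\leq k<\beta_\ast$. Combined with the bounds $|\alpha|\geq\alpha_\ast$ and $|\beta|\geq\beta_\ast$ from the first step, this forces $\alpha<0$ and $\beta<0$: indeed, were $\alpha\geq 0$ we would obtain $|\alpha|=\alpha\leq k<\alpha_\ast\leq|\alpha|$, which is absurd, and similarly for $\beta$.

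The contradiction then comes essentially for free. Since $\alpha$ and $\beta$ are strictly negative integers we have $\alpha\leq-1$ and $\beta\leq-1$, so $m\alpha+n\beta\leq-(m+n)\leq-2<-1$, which is incompatible with $m\alpha+n\beta=\pm1$; here the positivity of $m$ and $n$ is exactly what is needed to force the final inequality. Hence $\Lambda_{k,k'}=\emptyset$ under the first set of hypotheses. The second alternative, in which $k'$ is small, is handled by the mirror-image argument: now \eqref{e:j1'j2'} gives $j_1',j_2'\leq k'$, so $\alpha\geq -j_1'\geq -k'>-\alpha_\ast$ and $\beta>-\beta_\ast$, which together with the same lower bounds force $\alpha>0$ and $\beta>0$, whence $m\alpha+n\beta\geq m+n>1$, again contradicting $m\alpha+n\beta=\pm1$.

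I do not anticipate a serious obstacle here; the one point deserving a little care is the first step, namely matching the quantities $j_1-j_1'$ and $j_2-j_2'$ that occur in $\Lambda_{k,k'}$ against the minima $\alpha_\ast$ and $\beta_\ast$ in spite of the $\pm$ ambiguity in \eqref{e:j1j1'}--\eqref{e:j2j2'}. Everything else is bookkeeping, and the decisive ingredient is simply that the positivity hypothesis on $m$ and $n$ is what makes $m\alpha+n\beta$ strictly exceed $1$ in modulus once $\alpha$ and $\beta$ share a sign.
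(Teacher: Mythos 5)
Your proof is correct and takes essentially the same route as the paper: both arguments set $\alpha=j_1-j_1'$, $\beta=j_2-j_2'$, use the parametrisation $\pm(\alpha_0,\beta_0)+N(n,-m)$ of solutions of $m\alpha+n\beta=\pm1$ to get $|\alpha|\geq\alpha_\ast$, $|\beta|\geq\beta_\ast$, bound $\alpha,\beta\leq k$ via the nonnegativity of the $j$'s, and then exploit the positivity of $m$ and $n$ for the sign contradiction. The only cosmetic difference is the logical packaging: the paper observes that $\alpha$ and $\beta$ must have opposing signs and lets the positive one force $k\geq\min\{\alpha_\ast,\beta_\ast\}$, whereas you deduce that both are negative and contradict $m\alpha+n\beta=\pm1$ directly.
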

\begin{proof}
  By symmetry, it suffices to check that $\Lambda_{k,k'}$ is empty when
  \eqref{e:tocheck} holds.
  Suppose $(\textbf{j},\textbf{j}') \in \Lambda_{k,k'}$ and set
  $(\alpha,\beta) := (j_1-j_1',j_2-j_2')$ so that $\alpha m +\beta n = \pm 1$. By \eqref{e:j1j1'},
  \eqref{e:j2j2'}, \eqref{e:j1j2} and \eqref{e:j1'j2'} it follows that
  \begin{equation*}
    k \geq j_1 \geq j_1-j_1' = \pm \alpha_0 +Nn
  \end{equation*}
  and
  \begin{equation*}
    k \geq j_2 \geq j_2-j_2' = \pm \beta_0 -Nm
  \end{equation*}
  for some $N \in \Z$. Since $m(j_1-j_1')+n(j_2-j_2') = \pm 1$, we have that $\pm \alpha_0+Nn$ and $\pm
  \beta_0-Nm$ must have opposing signs. Therefore,
  \begin{equation*}
    k  \geq  \min_{N \in \Z} \min \{ |\alpha_0 + Nn|,|\beta_0-Nm|
    \} = \min \{\alpha_\ast,\beta_\ast\}.
  \end{equation*}
  Hence $\Lambda_{k,k'}$ is empty when \eqref{e:tocheck} holds.
\end{proof}
To complete the proof of Theorem \ref{t:counter} when $p=1$, let
$k(q)$ denote the smallest integer greater than $q/2+1$ and consider
the following particularly simple choice of $m$ and $n$:
\begin{equation*} \label{e:choice}
m = 2k(q)+1 \quad \mbox{and} \quad n=m+2.
\end{equation*}
Now
\begin{equation} \label{e:star}
\left(\frac{m+1}{2}\right)m + \left(-\frac{m-1}{2}\right)n = 1
\end{equation}
and an easy calculation shows that
\begin{equation*}
\alpha_\ast = k(q)+1 \quad \mbox{and} \quad \beta_\ast = k(q).
\end{equation*}
Therefore, by Lemma \ref{l:parity} and Lemma \ref{l:star},
\begin{equation*}
  c_1+c_{-1} = \sum_{\substack{k,k'\geq k(q) \\ k,k'
  \text{opposing parity}}}
  a_ka_{k'}r^{k+k'} \sum_{(\textbf{j},\textbf{j}') \in \Lambda_{k,k'}} {k \choose j_1}
  {k' \choose j_1'}.
\end{equation*}
Moreover, \eqref{e:star} trivially implies that
$\Lambda_{k(q)+1,k(q)}$ is nonempty and hence $c_1+c_{-1} < 0$, as
required.

We remark that there are many choices of the integers $m$ and $n$
which would have worked when $p=1$. Our argument for $p > 1$ below
capitalises on the fact that there exist $m$ and $n$ separated by a
distance $O(m)$ and such that $\Lambda_{k,k'}$ continues to be empty
whenever $a_ka_{k'} > 0$. Moreover, we shall require that $m$ can be
chosen as large as we please. To see that such a choice of $m$ and
$n$ is possible, suppose
\begin{equation} \label{e:choice2}
m = 3k_0+1 \quad \mbox{and} \quad n=2m+3
\end{equation}
where $k_0$ is an even integer greater than $q/2+1$. A
straightforward computation shows that
\begin{equation} \label{e:star2}
\left(\frac{2m+1}{3}\right)m + \left(-\frac{m-1}{3}\right)n = 1
\end{equation}
and consequently
\begin{equation*}
\alpha_\ast = 2k_0+1 \quad \text{and} \quad \beta_\ast = k_0.
\end{equation*}
Furthermore, $\Lambda_{2k_0+1,k_0}$ is nonempty by \eqref{e:star2}.
Since $k_0$ is even it again follows from Lemma \ref{l:parity} and
Lemma \ref{l:star} that $c_1+c_{-1} < 0$. We emphasise that $k_0$
can be as large as we please in this argument.

Now suppose $p > 1$ and let $m$ and $n$ be given by
\eqref{e:choice2}. The idea behind the remainder of the proof is the
following. It is clear that for sufficiently large $m$ and small $t
> 0$, $H_t \ast \mu$ is a finite sum of ``well-separated"
gaussians causing $(H_t \ast \mu)^{1/p}$ to be ``very close" to
$H_t^{1/p}\ast \widetilde{\mu}$, where
\begin{equation*}
\widetilde{\mu} := \delta_0 + r^{1/p}\delta_m + r^{1/p}\delta_n.
\end{equation*}
Given this, $Q_{p,q}(t)^q$ should be ``very close" to
$$p^{dq/2}t^{d/2}\|\widehat{H_{pt} \ast \widetilde{\mu}}\|_q^q.$$
Furthermore, if $r \in (0,1/2^p)$, this last quantity, as we have
seen, is strictly decreasing for sufficiently small $t$ with
derivative bounded above in modulus by a constant multiple of
$t^{-2}e^{-\pi/pqt}$. Thus, to conclude our proof of Theorem
\ref{t:counter} when $p > 1$, it suffices to check that the error
\begin{eqnarray*}
\begin{aligned}
E_{p,q}(t)&:=Q_{p,q}(t)^q-p^{dq/2}t^{d/2}\|\widehat{H_{pt} \ast \widetilde{\mu}}\|_q^q\\
&=t^{\frac{1}{2}\left(1-\frac{q}{p'}\right)}\int_{\mathbb{R}}\bigg(\big|((H_t*\mu)^{\frac{1}{p}})\hat{\;}\big|^q
-\big|(H_t^{\frac{1}{p}}*\widetilde{\mu})\hat{\;}\big|^q\bigg)
\end{aligned}
\end{eqnarray*}
has a derivative bound of the form
\begin{equation}\label{perm} |E_{p,q}'(t)|\leq
Ct^{-\gamma}e^{-cm^2/t},\end{equation} for sufficiently large $m$
and small $t$. Here $C=C_{p,q,m}$ denotes a constant that may depend
on $p$, $q$ and $m$, and $\gamma=\gamma_{p,q}$ and $c=c_{p,q}$
constants that may depend on $p$ and $q$.

Differentiating and grouping terms we obtain
\begin{equation*}
  E_{p,q}'(t) = \tfrac{1}{2}\left(1-\tfrac{q}{p'}\right)t^{-\frac{1}{2}\left(1+\frac{q}{p'}\right)}(I + II) +
  t^{\frac{1}{2}\left(1-\frac{q}{p'}\right)}(III + IV),
\end{equation*}
where
\begin{align*}
I & :=
\int\big|((H_t*\mu)^{\frac{1}{p}})\hat{\;}\big|^{q-2}((H_t*\mu)^{\frac{1}{p}})\hat{\;}
\bigg(((H_t*\mu)^{\frac{1}{p}})\hat{\;}-(H_t^{\frac{1}{p}}*\widetilde{\mu})\hat{\;}\bigg),\\
II & := \int(H_t^{\frac{1}{p}}*\widetilde{\mu})\hat{\;}\bigg(\,
\big|((H_t*\mu)^{\frac{1}{p}})\hat{\;}\big|^{q-2}((H_t*\mu)^{\frac{1}{p}})\hat{\;}-
\big|(H_t^{\frac{1}{p}}*\widetilde{\mu})\hat{\;}\big|^{q-2}
(H_t^{\frac{1}{p}}*\widetilde{\mu})\hat{\;}\bigg),\\
III & :=
\int\big|((H_t*\mu)^{\frac{1}{p}})\hat{\;}\big|^{q-2}((H_t*\mu)^{\frac{1}{p}})\hat{\;}
\bigg((\partial_t(H_t*\mu)^{\frac{1}{p}})\hat{\;}-(\partial_tH_t^{\frac{1}{p}}*\widetilde{\mu})\hat{\;}\bigg),\\
IV & :=
\int(\partial_tH_t^{\frac{1}{p}}*\widetilde{\mu})\hat{\;}\bigg( \,
\big|((H_t*\mu)^{\frac{1}{p}})\hat{\;}\big|^{q-2}((H_t*\mu)^{\frac{1}{p}})\hat{\;}-
\big|(H_t^{\frac{1}{p}}*\widetilde{\mu})\hat{\;}\big|^{q-2}
(H_t^{\frac{1}{p}}*\widetilde{\mu})\hat{\;}\bigg).
\end{align*}
Bounds of the form \eqref{perm} are easily obtained for $I$, $II$,
$III$ and $IV$ by elementary estimates, such as the Cauchy--Schwarz
and Hausdorff--Young inequalities, along with pointwise estimates on
the heat kernel $H_t$. We illustrate this for the term $I$, leaving
the remaining terms to the reader. Applying the Cauchy--Schwarz
inequality, the Hausdorff--Young inequality and Plancherel's
theorem, we have
\begin{eqnarray*}\begin{aligned}
I&\lesssim
\bigg(\int\big|((H_t*\mu)^{\frac{1}{p}})\hat{\;}\big|^{2(q-1)}\bigg)^{1/2}
\bigg(\int\big|((H_t*\mu)^{\frac{1}{p}})\hat{\;}-(H_t^{\frac{1}{p}}*\widetilde{\mu})\hat{\;}\big|^2\bigg)^{1/2}\\
&\leq
\bigg(\int(H_t*\mu)^{\frac{2(q-1)}{(2q-3)p}}\bigg)^{\frac{2q-3}{2}}
\bigg(\int\big|(H_t*\mu)^{\frac{1}{p}}-H_t^{\frac{1}{p}}*\widetilde{\mu}\big|^2\bigg)^{1/2}.
\end{aligned}\end{eqnarray*}
Here we have used the fact that $q\geq 2$. Now, the first integral
factor above is bounded by a power of $t$ (which is permissible).
For the second integral factor we split the integration over $\R$
into $\bigcup_0^6 I_j$, where
\begin{align*}
& I_0 = (-\infty,-\epsilon m], \quad I_1 = [-\epsilon m,\epsilon m],
\quad I_2 = [\epsilon m, (1-\epsilon)m], \quad
I_3 = [(1-\epsilon)m,(1+\epsilon)m], \\
& I_4 = [(1+\epsilon)m,(1-\epsilon)n], \quad I_5 =
[(1-\epsilon)n,(1+\epsilon)n] \quad \mbox{and} \quad I_6 =
[(1+\epsilon)n,\infty),
\end{align*}
for some sufficiently small positive absolute constant $\epsilon$.
We claim that a bound of the form \eqref{perm} holds for each term
$$
\int_{I_j}\bigg|(H_t*\mu)^{\frac{1}{p}}-H_t^{\frac{1}{p}}*\widetilde{\mu}\bigg|^2.
$$
For $j=0,2,4,6$ this is a simple consequence of the triangle
inequality combined with elementary estimates on the heat kernel
$H_t$. For $j=1$ this follows from the facts that for $x\in I_1$,
$$
H_t*\mu(x)=H_t(x)+O(t^{-1/2}e^{-cm^2/t})
$$
and
$$
H_t^{1/p}*\widetilde{\mu}(x)=H_t(x)^{1/p}+O(t^{-1/2p}e^{-cm^2/t}),$$
and the mean value theorem applied to the function $x\mapsto
x^{1/p}$. The cases $j=3$ and $j=5$ are similar.

\section*{Appendix: The even integer case} We will appeal to the following general theorem from \cite{BCCT}
(see Proposition 8.9).
\begin{theorem}\label{bl}
Let $m,n\in\mathbb{N}$, $n_1,\hdots,n_m\in\mathbb{N}$ and
$p_1,\hdots,p_m>0$. Suppose that for each $1\leq j\leq m$ there are
linear surjections $B_j:\mathbb{R}^{n}\rightarrow\mathbb{R}^{n_j}$
and $A_j:\mathbb{R}^{n_j}\rightarrow\mathbb{R}^{n_j}$ such that the
mapping $M=\sum_{j=1}^m\frac{1}{p_j}B_j^{*}A_jB_j$ is invertible and
$$B_jM^{-1}B_j^{*}\leq A_j^{-1}$$
for all $1\leq j\leq m$. Also, for each $1\leq j\leq m$ let $u_j$ be
a solution to the heat equation
\begin{equation}\label{he}
\partial_t u_j=\frac{1}{4\pi}\emph{div}(A_j^{-1}\nabla u_j).
\end{equation}
Then the quantity
$$
t^{\frac{1}{2}(\sum_{j=1}^m
\frac{n_j}{p_j}-n)}\int_{\mathbb{R}^n}\prod_{j=1}^m u_j(t,B_j
x)^{1/p_j}dx$$ is non-decreasing for $t > 0$.
\end{theorem}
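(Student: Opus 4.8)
The plan is to prove directly that $Q'(t)\geq 0$, where $Q(t)=t^{\sigma}\int_{\R^n}U(t,x)\,dx$, $U(t,x)=\prod_{j}u_j(t,B_jx)^{1/p_j}$ and $\sigma=\tfrac12\bigl(\sum_j n_j/p_j-n\bigr)$. Writing $c_j=1/p_j$, and abbreviating by $v_j=\nabla\log u_j$ and $S_j=\nabla^2\log u_j$ the logarithmic gradient and Hessian of $u_j$ (evaluated at $B_jx$), the first step is to substitute the heat equations \eqref{he} into $\partial_t\log U$. Using $\nabla u_j=u_jv_j$ one finds $4\pi\,\partial_t\log u_j=v_j^{*}A_j^{-1}v_j+\operatorname{tr}(A_j^{-1}S_j)$, so that
\begin{equation*}
4\pi\,\tfrac{d}{dt}\textstyle\int U=\int U\sum_j c_j\bigl(v_j^{*}A_j^{-1}v_j+\operatorname{tr}(A_j^{-1}S_j)\bigr)\,dx.
\end{equation*}
It is convenient to note that, with these Gaussian-type solutions, all differentiations under the integral and integrations by parts below are justified by the rapid decay of $U$.

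The key move is to split $A_j^{-1}=\Delta_j+B_jM^{-1}B_j^{*}$ with $\Delta_j:=A_j^{-1}-B_jM^{-1}B_j^{*}$, and to handle the two resulting second-order contributions differently. For the $B_jM^{-1}B_j^{*}$ piece I would use the chain-rule identity $\sum_j c_j B_j^{*}S_jB_j=\nabla_x^2\log U$ together with a single integration by parts in $x$: since $\operatorname{tr}(M^{-1}\nabla_x^2\log U)=\operatorname{div}_x(M^{-1}V)$ where $V:=\nabla_x\log U=\sum_j c_jB_j^{*}v_j$, one obtains $\int U\sum_j c_j\operatorname{tr}(B_jM^{-1}B_j^{*}S_j)\,dx=-\int U\,V^{*}M^{-1}V\,dx$. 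Collecting terms leaves
\begin{equation*}
4\pi\,\tfrac{d}{dt}\textstyle\int U=\int U\,R_0\,dx+\int U\sum_j c_j\operatorname{tr}(\Delta_jS_j)\,dx,\qquad R_0:=\sum_j c_jv_j^{*}A_j^{-1}v_j-V^{*}M^{-1}V.
\end{equation*}

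Next I would absorb the prefactor. Using $\operatorname{tr}(\Delta_jA_j)=n_j-\operatorname{tr}(M^{-1}B_j^{*}A_jB_j)$ and $M=\sum_j c_jB_j^{*}A_jB_j$, one checks that $\sum_j c_j\operatorname{tr}(\Delta_jA_j)=\sum_j c_jn_j-n=2\sigma$; hence the scaling term $\tfrac{4\pi\sigma}{t}\int U$ can be written as $\int U\sum_j c_j\operatorname{tr}(\Delta_j\tfrac{2\pi}{t}A_j)\,dx$ and merged with the previous display to give
\begin{equation*}
4\pi\,t^{-\sigma}Q'(t)=\int U\,R_0\,dx+\int U\sum_j c_j\operatorname{tr}\!\bigl(\Delta_j(S_j+\tfrac{2\pi}{t}A_j)\bigr)\,dx.
\end{equation*}
Nonnegativity then follows from three facts. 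First, $R_0\geq 0$ pointwise by completing the square: with $\xi=M^{-1}V$ one has $R_0=\sum_j c_j(v_j-A_jB_j\xi)^{*}A_j^{-1}(v_j-A_jB_j\xi)$, using only $M=\sum_j c_jB_j^{*}A_jB_j$ and $A_j>0$. Second, $\Delta_j\geq 0$ --- this is precisely the hypothesis $B_jM^{-1}B_j^{*}\leq A_j^{-1}$. Third, $S_j+\tfrac{2\pi}{t}A_j\geq 0$, i.e. $y\mapsto\log u_j(t,y)+\tfrac{\pi}{t}y^{*}A_jy$ is convex; this holds because the positive solution $u_j$ is a superposition of the kernels $(\det A_j)^{1/2}t^{-n_j/2}e^{-\pi y^{*}A_jy/t}$, so that $u_j(t,y)e^{\pi y^{*}A_jy/t}$ is (a constant times) the Laplace transform $\int e^{\frac{2\pi}{t}(A_jy)\cdot z}\,d\nu_j(z)$ of the positive measure $d\nu_j(z)=e^{-\pi z^{*}A_jz/t}\,d\mu_j(z)$, whose logarithm is convex. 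Since the trace of a product of two positive semidefinite symmetric matrices is nonnegative, every summand of the second integral is $\geq 0$, and $Q'(t)\geq 0$ follows. (On the Gaussian kernels both $R_0$ and $S_j+\tfrac{2\pi}{t}A_j$ vanish, confirming that $\sigma$ is calibrated so that $Q$ is constant there.)

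The least routine step, and the one I expect to be the main obstacle, is the third: the Hessian lower bound $\nabla^2\log u_j\geq-\tfrac{2\pi}{t}A_j$ for positive solutions. This is where positivity of the data is indispensable --- it is exactly the ingredient that fails for signed or complex data, and it is a genuine property of the heat flow rather than of linear algebra. By contrast, the completing-the-square identity for $R_0$ and the single integration by parts are routine once the decomposition $A_j^{-1}=\Delta_j+B_jM^{-1}B_j^{*}$ is in place, and the hypothesis of the theorem enters in a single, transparent way, namely through the nonnegativity of $\Delta_j$.
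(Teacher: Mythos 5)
Your proposal is correct, but there is nothing in the paper to compare it against line-by-line: the paper does not prove Theorem \ref{bl} at all. It imports the statement verbatim from \cite{BCCT} (Proposition 8.9), and the Appendix only verifies the hypotheses of the theorem for the specific data $n=(2k-1)d$, $A_j=I_d$, $p_j=p$ and the maps $B_j$ arising from expanding the $L^2$ norm in \eqref{James}. What you have written is a self-contained reconstruction of the heat-flow monotonicity argument of Carlen--Lieb--Loss and Bennett--Carbery--Christ--Tao, i.e.\ essentially the proof of the cited result rather than an alternative to anything in this paper. The steps all check out: the identity $4\pi\partial_t\log u_j=v_j^{*}A_j^{-1}v_j+\operatorname{tr}(A_j^{-1}S_j)$; the decomposition $A_j^{-1}=\Delta_j+B_jM^{-1}B_j^{*}$, with $\Delta_j\geq 0$ being precisely the hypothesis; the integration by parts turning $\operatorname{tr}(M^{-1}\nabla_x^2\log U)$ into $-V^{*}M^{-1}V$; the bookkeeping identity $\sum_j c_j\operatorname{tr}(\Delta_jA_j)=\sum_j c_jn_j-n=2\sigma$, which is exactly what absorbs the prefactor $t^{\sigma}$; the completion of the square for $R_0$; and the Hessian bound $S_j\geq-\tfrac{2\pi}{t}A_j$ via the Laplace-transform representation of positive solutions. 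Two assumptions you use silently should be flagged, though both are implicit in the statement and hold in the paper's application: the $A_j$ must be symmetric positive definite (otherwise the order relation $B_jM^{-1}B_j^{*}\leq A_j^{-1}$ and the parabolicity of \eqref{he} are meaningless, and positive definiteness is what makes $M>0$ and validates your completing-the-square step), and the $u_j$ must be positive --- the bare phrase ``a solution to the heat equation'' is too weak, and your third step needs $u_j$ to be the evolution of a positive measure (for a general nonnegative solution one would appeal to a Widder-type representation theorem; in the paper's application $u_j=H_t\ast\mu$, so this is automatic). Your closing observation is also on target: the Hessian lower bound is the one step where positivity of the data is irreplaceable, which is consistent with the failure of monotonicity for the oscillatory data underlying Theorem \ref{t:counter}.
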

Multiplying out the $L^2$ norm in \eqref{James}, we see that
\begin{equation*}
Q_{p,2k}(t)^{2k} = t^{\frac{1}{2}(\sum_{j=1}^m
\frac{n_j}{p_j}-n)}\int_{\mathbb{R}^n}\prod_{j=1}^m u_j(t,B_j
x)^{1/p_j}dx, \end{equation*} where $n=(2k-1)d$, $m=2k$, $n_j=d$,
$p_j=p$ and
\begin{eqnarray*}
B_j : & \R^{(2k-1)d} & \longrightarrow  \quad \R^d \\
& (x_1,\ldots,x_{2k-1}) & \longmapsto \quad \left\{
\begin{array}{llllll} x_j & \text{for $j=1,\ldots,2k-1$} \\
\sum_{j=1}^k x_j - \sum_{j'=k+1}^{2k-1} x_{j'} & \text{for $j=2k$}
\end{array}  \right.
\end{eqnarray*}
Furthermore, for each $j=1,\ldots,2k$, $u_j = u$ satisfies the heat
equation \eqref{he} where $A_j$ is the identity mapping. Since
$p_j=p$ for all $j$, by homogeneity it suffices to verify the
remaining hypotheses of Theorem \ref{bl} when $p=(2k)'$.

It is straightforward to verify that the matrix $M = \frac{1}{(2k)'}
\sum_{j=1}^{2k} B_j^*B_j$ has the following block form
representation with respect to the canonical bases.
\begin{equation*}
  M = \frac{1}{(2k)'} \left(\begin{array}{cccccccccccc}
\textbf{1}(k,k) + I_{kd} &
  -\textbf{1}(k,k-1) \\
  -\textbf{1}(k-1,k) & \textbf{1}(k-1,k-1) + I_{(k-1)d} \end{array}
  \right)
\end{equation*}
where $\textbf{1}(r,s)$ denotes the $rd$ by $sd$ matrix given by
\begin{equation*}
  \textbf{1}(r,s) = \left(\begin{array}{cccccccccccc}
I_d & I_d & \cdots & I_d \\
\vdots & \vdots & & \vdots \\
I_d & I_d & \cdots & I_d
\end{array} \right)
\end{equation*}
and $I_l$ is the $l$ by $l$ identity matrix. A direct computation
shows that $M$ is invertible with
\begin{equation*}
  M^{-1} = \frac{1}{2k-1} \left(\begin{array}{cccccccccccc}  -\textbf{1}(k,k) + 2kI_{kd} &
  \textbf{1}(k,k-1) \\
  \textbf{1}(k-1,k) & -\textbf{1}(k-1,k-1) + 2kI_{(k-1)d} \end{array}
  \right)
\end{equation*}
and $B_jM^{-1}B_j^* = I_d$ for each $j=1,\ldots,k$. It now follows
from Theorem \ref{bl} that $Q_{p,2k}(t)$ is nondecreasing for each
$t > 0$.
\bibliographystyle{amsalpha}

\end{document}